\newcommand{\vfi}{\varphi}
\newtheorem{theorem}{\sc Theorem}[section]
\newtheorem{thm}[theorem]{\sc Theorem}
\newtheorem{lem}[theorem]{\sc Lemma}
\newtheorem{prop}[theorem]{\sc Proposition}
\newtheorem{cor}[theorem]{\sc Corollary}
\newtheorem{rem}[theorem]{\sc Remark}
\newtheorem*{thmA}{Theorem A}
\newtheorem*{thmB}{Theorem B}
\newtheorem*{thmC}{Theorem C}
\title[Non-abelian tensor square of $p$-groups]{Non-abelian tensor square and related  constructions of $p$-groups}
\author[Bastos]{R.  Bastos}
\address{Departamento de Matem\'atica, Universidade de Bras\'ilia,
Brasilia-DF, 70910-900 Brazil}
\email{(Bastos) bastos@mat.unb.br}
\author[de Melo]{E. de Melo}
\email{(de Melo) emersonueg@hotmail.com}
\author[Gon\c calves]{N. Gon\c calves}
\email{(Gon\c calves) nathalia@mat.unb.br}
\author[Nunes]{R. Nunes}
\address{ Departamento de Matem\'atica, Universidade Federal de Goi\'as,
Goi\^ania-GO, 74690-900 Brazil }
\email{(Nunes) ricardo@ufg.br}
\subjclass[2010]{20D15, 20E06}
\keywords{Finite $p$-groups; weak commutativity}
\begin{document}

\maketitle

\begin{abstract}
Let $G$ be a group. We denote by $\nu(G)$ a certain extension of the non-abelian tensor square $[G,G^{\varphi}]$ by $G \times G$. We prove that if $G$ is a finite potent $p$-group, then $[G,G^{\varphi}]$ and the $k$-th term of the lower central series  $\gamma_k(\nu(G))$ are potently embedded in $\nu(G)$ (Theorem A). Moreover, we show that if $G$ is a potent $p$-group, then the exponent   $\exp(\nu(G))$ divides $p \cdot \exp(G)$ (Theorem B). We also study the weak commutativity construction of powerful $p$-groups (Theorem C).    
\end{abstract}

%%% ----------------------------------------------------------------------
\maketitle
%%% ----------------------------------------------------------------------
%\tableofcontents

\section{Introduction}

The non-abelian tensor square $G \otimes G$ of a group $G$, as introduced by R. Brown and J.\,L. Loday \cite{BL84, BL}, is defined to be the group generated by all symbols $\; \, g\otimes h, \; g,h\in G$, subject to the relations
\[
gg_1 \otimes h = ( g^{g_1}\otimes h^{g_1}) (g_1\otimes h) \quad
\mbox{and} \quad g\otimes hh_1 = (g\otimes h_1)( g^{h_1} \otimes
h^{h_1})
\]
for all $g,g_1, h,h_1 \in G$, where  we write $x^y$ for the conjugate $y^{-1} x y$ of $x$ by $y$, for any elements $x, y \in G$. In the same paper, R. Brown and J.\,L. Loday show that the third homotopy group of the suspension of an Eilenberg-MacLane space $K(G,1)$ satisfies
$$
\pi_3(SK(G,1)) \cong \mu(G),
$$
where $\mu(G)$ is the kernel of the derived map $G \otimes G \to G'$, given by $g \otimes h \mapsto [g,h]$. Consider the following short exact sequence, as in \cite{BL}, 
$$
1 \to \Delta(G) \to \mu(G) \to H_2(G)\to 1,
$$
where $\Delta(G) = \langle g \otimes g \mid g \in G\rangle$ and $H_2(G)$ is the second homology group of the group $G$. It has been shown in \cite{Miller} that $H_2(G) \cong M(G)$, where $M(G)$ is the Schur multiplier of $G$. See also \cite[Chapter 2 and 3]{NR2}.  

The study of the non-abelian tensor square of groups from a group theoretic point of view was initiated by R. Brown, D.\,L. Johnson and E.\,F. Robertson \cite{BJR}. We observe that the defining relations of the non-abelian tensor square can be viewed as abstractions of commutator relations; thus in \cite{NR1}, N.\,R. Rocco considered the following construction. Let $G$ be a group and let $\varphi : G \rightarrow G^{\varphi}$ be an isomorphism ($G^{\varphi}$ is a copy of $G$, where $g \mapsto g^{\varphi}$, for all $g \in G$). Define the group $\nu(G)$ to be \[ \nu (G):= \langle 
G \cup G^{\varphi} \ \vert \ [g_1,{g_2}^{\varphi}]^{g_3}=[{g_1}^{g_3},({g_2}^{g_3})^{\varphi}]=[g_1,{g_2}^{\varphi}]^{{g_3}^{\varphi}},
\; \ g_i \in G \rangle .\]

The motivation for studying $\nu(G)$ is the commutator connection: indeed, the map  $\Phi: G \otimes G \rightarrow [G, G^{\varphi}]$,
defined by $g \otimes h \mapsto [g , h^{\varphi}]$, for all $g, h \in G$, is an isomorphism  \cite[Proposition 2.6]{NR1}. Therefore, from now on we shall  identify the non-abelian tensor square $G \otimes G$ with the subgroup $[G,G^{\vfi}]$ of $\nu(G)$ and write $[g, h^\vfi]$ instead of $g \otimes h$, for all 
$g, h \in G.$ For a fuller treatment we refer the reader to \cite{K,NR01}. 

Our purpose is to study the structure of the non-abelian tensor square and related constructions of finite powerful and potent $p$-groups.

Let $p$ be a prime number. A finite $p$-group $G$ is said to be {\em powerful} if $p>2$ and $G' \leqslant G^p$, or $p=2$ and $G' \leqslant G^4$. We can define a more general class of $p$-groups. We call a finite $p$-group {\em potent} if $p>2$ and $\gamma_{p-1}(G) \leqslant G^p$, or $p=2$ and $G' \leqslant G^4$. Note that the family of potent $p$-groups contains all powerful $p$-groups. Recall that a subgroup $N$ of $G$ is potently embedded in $G$ if $[N, G]\leqslant N^4$, for $p=2$, or $[N,_{p-2}G]\leqslant N^p$ for odd prime  ($N$ is powerfully embedded in $G$ if $[N, G]\leqslant N^4$, for $p=2$, or $[N,G]\leqslant N^p$ for odd prime). More information on finite powerful and potent  $p$-groups can be found in \cite{D} and  \cite{JJ}, respectively.

In \cite{M09}, Moravec proved that if $G$ is a powerful $p$-group, then the non-abelian tensor square  $[G,G^{\varphi}]$ and the derived subgroup $\nu(G)'$ are powerfully embedded in $\nu(G)$. Moreover, the exponent $\exp(\nu(G)')$ divides $\exp(G)$. We extend these results to potent $p$-groups.  

\begin{thmA}
Let $p$ be a prime and $G$ a finite potent $p$-group. 
\begin{itemize} 
\item[(a)] The non-abelian tensor square $[G,G^{\varphi}]$ is potently embedded in $\nu(G)$;   
\item[(b)] If $k \geqslant 2$, then the $k$-th term of the lower central series $\gamma_k(\nu(G))$ is potently embedded in $\nu(G)$.
\end{itemize}
\end{thmA}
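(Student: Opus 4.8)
When $p=2$, and also when $p=3$ (``potent'' then meaning $G'\le G^{3}$, and ``potently embedded'' meaning $[N,G]\le N^{3}$), both parts coincide with Moravec's theorem, so I assume $p\ge 5$ and put $W=[G,G^{\vfi}]$. The ingredients are: (i) the structure of $\nu(G)$ — $W\trianglelefteq\nu(G)$, $\nu(G)/W\cong G\times G$, $\nu(G)=G\,G^{\vfi}\,W$, the bilinearity relations for the symbols $[g,h^{\vfi}]$, Rocco's identity $[[g,h^{\vfi}],[a,b^{\vfi}]]=[[g,h],[a,b]^{\vfi}]$ (whence $[W,W]=[G',(G')^{\vfi}]$), and the fact that $G$ and $G^{\vfi}$ induce the same automorphisms of $W$, i.e. $w^{x^{\vfi}}=w^{x}$ for $w\in W$, $x\in G$; (ii) the structure theory of potent $p$-groups — each $\gamma_{i}(G)$ is potently embedded in $G$ (so $\gamma_{i+p-2}(G)\le\gamma_{i}(G)^{p}$, in particular $\gamma_{p}(G)\le(G')^{p}$), the $p$-th powers form a subgroup, and, consequently, $G'$ is again potent (since $\gamma_{p-1}(G')\le\gamma_{2p-2}(G)\le\gamma_{p}(G)\le(G')^{p}$); and (iii) the elementary reduction that a normal subgroup $N$ of a finite $p$-group $H$ is potently embedded as soon as $[N,_{p-2}H]\le N^{p}[N,_{p-1}H]$ — with $M=[N,_{p-2}H]$ this forces $MN^{p}=N^{p}[M,H]$, so the image of $M$ in the nilpotent group $H/N^{p}$ equals its commutator with $H/N^{p}$ and hence vanishes. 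By (iii) it suffices, for the relevant $N$, to verify $[N,_{p-2}\nu(G)]\le N^{p}[N,_{p-1}\nu(G)]$ (one step when $p=2$).

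For part (a), using $w^{x^{\vfi}}=w^{x}$, the decomposition $\nu(G)=G\,G^{\vfi}\,W$, Rocco's identities and bilinearity, one checks that $[W,_{p-2}\nu(G)]$ is generated modulo $[W,_{p-1}\nu(G)]$ by the ``pure tensors'' $[u,v^{\vfi}]$ with $u\in\gamma_{i}(G)$, $v\in\gamma_{j}(G)$, $i+j=p$: since $G$ acts diagonally on $W\cong G\otimes G$, commuting a tensor with an element of $G$ distributes over the two coordinates by a Leibniz rule that is exact in the abelian section $[W,\nu(G)]/[W,_{p-1}\nu(G)]$. For the \emph{extreme} weights $(p-1,1)$ and $(1,p-1)$, potency gives $\gamma_{p-1}(G)\le G^{p}$ and Hall--Petrescu gives $[a^{p},b^{\vfi}]\equiv[a,b^{\vfi}]^{p}$ modulo $[W,_{p-1}\nu(G)]$ and modulo correction terms carrying exponents divisible by $p$ and lying in the various $[W,_{m}\nu(G)]\le W$, hence in $W^{p}$; so these tensors lie in $W^{p}[W,_{p-1}\nu(G)]$. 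For an \emph{intermediate} weight $2\le i,j\le p-2$ both coordinates lie in $G'$, so by Rocco's identity the tensor lies in $[W,W]=[G',(G')^{\vfi}]$; handling these is the technical heart of the argument, and is where the finer theory of potent $p$-groups must be used — most naturally via an induction on $|G|$, exploiting that $G'$ is again potent and relating $[G',(G')^{\vfi}]$ to the tensor square of $G'$ through the functorial map $\nu(G')\to\nu(G)$. Granting this, $[W,_{p-2}\nu(G)]\le W^{p}[W,_{p-1}\nu(G)]$, and (a) follows.

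For part (b) I would use the decomposition $\gamma_{k}(\nu(G))=\gamma_{k}(G)\,\gamma_{k}(G)^{\vfi}\,\bigl(\gamma_{k}(\nu(G))\cap W\bigr)$ — valid since $\gamma_{k}(\nu(G))W/W=\gamma_{k}(G\times G)$ and $\gamma_{k}(G),\gamma_{k}(G)^{\vfi}\le\gamma_{k}(\nu(G))$; for $k=2$ it reads $\nu(G)'=G'(G')^{\vfi}W$, as $W\le\nu(G)'$ — and apply (iii) to $N=\gamma_{k}(\nu(G))$. The group $[\gamma_{k}(\nu(G)),_{p-2}\nu(G)]=\gamma_{k+p-2}(\nu(G))$ decomposes the same way: the factors $\gamma_{k+p-2}(G)=[\gamma_{k}(G),_{p-2}G]$ and its $\vfi$-image lie in $\gamma_{k}(G)^{p}\le\gamma_{k}(\nu(G))^{p}$ because $\gamma_{k}(G)$ is potently embedded in $G$, while the component $\gamma_{k+p-2}(\nu(G))\cap W$ is handled by exactly the computation of part (a) applied to $[W,_{k-2}\nu(G)]$ in place of $W$ (which needs nothing beyond (a) and the observation that $[N,\nu(G)]$ is potently embedded whenever $N$ is). The case $p=2$ is Moravec's, with one-step versions of all identities.

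The main obstacle is therefore the intermediate-weight pure tensors $[\gamma_{i}(G),\gamma_{j}(G)^{\vfi}]$ with $i+j=p$ and $2\le i,j\le p-2$: potency supplies $\gamma_{p-1}(G)\le G^{p}$ only for the extreme weights, so for the intermediate ones neither coordinate is a priori a product of $p$-th powers, and showing that these tensors nonetheless lie in $W^{p}$ modulo $[W,_{p-1}\nu(G)]$ — using that they sit inside $[W,W]=[G',(G')^{\vfi}]$ and that $G'$ is again potent — is the step demanding genuine effort, together with the careful bookkeeping of Hall--Petrescu corrections needed to keep these corrections inside $[W,_{p-1}\nu(G)]$ rather than letting them escape it.
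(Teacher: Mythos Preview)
Your reduction (iii) and your treatment of the ``extreme'' weight $(p-1,1)$ are essentially what the paper does, but the obstacle you isolate --- the intermediate weights $[\gamma_i(G),\gamma_j(G)^{\vfi}]$ with $2\le i,j\le p-2$ --- is a phantom, created by missing one structural fact about $\nu(G)$. Rocco's Theorem~3.1 in \cite{NR1} gives, for every $k\ge 2$,
\[
\gamma_k(\nu(G))=\gamma_k(G)\,\gamma_k(G^{\vfi})\,[\gamma_{k-1}(G),G^{\vfi}],
\]
so the $W$-part of $\gamma_k(\nu(G))$ is exactly $[\gamma_{k-1}(G),G^{\vfi}]$, a single extreme weight, not the whole fan of weights $(i,j)$ with $i+j=k$. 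You quote the identity $[[g,h^{\vfi}],[a,b^{\vfi}]]=[[g,h],[a,b]^{\vfi}]$ but never this consequence, and it is precisely what makes the argument short.

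The paper's proof of (a) runs as follows. Assume $W^{p}=1$; the derived map $W\twoheadrightarrow G'$ gives $(G')^{p}=1$, and then potency (their Lemma~\ref{lem1}) forces $\gamma_{p}(G)=1$. Now $[[G,G^{\vfi}],_{p-2}\nu(G)]\le\gamma_{p}(\nu(G))$, and by Rocco's formula
\[
\gamma_{p}(\nu(G))=\gamma_{p}(G)\,\gamma_{p}(G^{\vfi})\,[\gamma_{p-1}(G),G^{\vfi}]=[\gamma_{p-1}(G),G^{\vfi}]\le[G^{p},G^{\vfi}].
\]
A single Hall--Petrescu computation (exactly your extreme-weight step) then kills $[x^{p},y^{\vfi}]$, since $\gamma_{2}(M)\le W$ and $\gamma_{p}(M)\le\gamma_{p+1}(\nu(G))=1$. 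No intermediate weights ever appear, no induction on $|G|$ via $\nu(G')\to\nu(G)$ is needed, and nothing is ``granted''.

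For (b) the paper is again more direct than your piecewise decomposition: the base case $k=2$ is their Proposition~\ref{prop.tec} with $n=p-1$, and the inductive step is the one-line estimate
\[
[\gamma_{k+1}(\nu(G)),_{p-2}\nu(G)]\le[\gamma_{k}(\nu(G))^{p},\nu(G)]\le\gamma_{k+1}(\nu(G))^{p}\,[\gamma_{k+1}(\nu(G)),_{p-2}\nu(G),\nu(G)]
\]
coming from $[N^{p},M]\le[N,M]^{p}[M,_{p}N]$ (their Lemma~\ref{lem.hallformula}), followed by Lemma~\ref{lem.normal}. So your outline for (b) would likely work, but it re-derives by hand what Rocco's formula and this commutator estimate give for free.

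In short: the gap you flag is real for your argument as written, but it dissolves once you use $\gamma_k(\nu(G))=\gamma_k(G)\gamma_k(G^{\vfi})[\gamma_{k-1}(G),G^{\vfi}]$; that is the key lemma you are missing.
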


\begin{thmB}
Let $p$ be a prime and $G$ a $p$-group with $\exp(G)=p^e$.
\begin{itemize}
\item[(a)] If $G$ is potent, then $\exp(\nu(G))$ divides $p^{e+1}$;
\item[(b)] If $\gamma_{p-2}(G)\leq G^p$, then $\nu(G)$ is a potent $p$-group. In particular, $\exp(\nu(G)) =p^e$.     
\end{itemize}
\end{thmB}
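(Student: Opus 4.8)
The plan is to treat parts (a) and (b) largely in parallel, exploiting the structure of $\nu(G)$ as an extension of the non-abelian tensor square $[G,G^{\vfi}]$ by $G \times G$, and then transferring exponent information across the pieces of this extension. First I would record the standard facts about $\nu(G)$ that make such an argument possible: the subgroup $[G,G^{\vfi}]$ is normal in $\nu(G)$, the quotient $\nu(G)/[G,G^{\vfi}]$ is isomorphic to $G \times G$, and there is the well-known commutator identity $[G,G^{\vfi}]' = [[G,G^{\vfi}],G] = [[G,G^{\vfi}],G^{\vfi}]$, so that understanding $\nu(G)'$ reduces to understanding $\gamma_k(\nu(G))$ for $k \ge 2$. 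For part (a), I would use Theorem A(a): since $[G,G^{\vfi}]$ is potently embedded in $\nu(G)$, it is in particular a potent (indeed the relevant subgroup of a potent-like ambient group) and one can control its exponent. The key point is that Moravec's results, together with the potent hypothesis, force $\exp([G,G^{\vfi}])$ to divide $p \cdot \exp(G) = p^{e+1}$ (one expects $\exp(\gamma_2(\nu(G)))$ to divide $\exp(G)=p^e$ by an analogue of Moravec's exponent bound for $\nu(G)'$, extended to the potent case via Theorem A(b)). Then, given $x \in \nu(G)$, write its image in $G \times G$; since $\exp(G \times G) = p^e$, we have $x^{p^e} \in [G,G^{\vfi}]$, and hence $x^{p^e \cdot ?}$ lands in a term we control — here one must be careful, because raising to the $p^e$ power is not a homomorphism in a non-abelian group.

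To handle that last subtlety — which I expect to be the main obstacle — I would follow the route used for powerful groups: show that the relevant sections of $\nu(G)$ behave "powerfully enough" that the map $x \mapsto x^{p^e}$ is well-behaved modulo the next term of a suitable filtration. Concretely, from Theorem A one knows $[G,G^{\vfi}]$ and $\gamma_k(\nu(G))$ are potently embedded; combined with the potent hypothesis on $G$, this should give that $\nu(G)$ itself, while not necessarily potent, has a normal filtration $\nu(G) \geq [G,G^{\vfi}] \geq [[G,G^{\vfi}],G] \geq \cdots$ in which each successive quotient has exponent dividing $p^e$ and in which $p$th powers are sufficiently controlled. Counting the "loss" of one factor of $p$ at the top (because $\nu(G)/[G,G^{\vfi}] \cong G\times G$ has exponent $p^e$ but $\nu(G)$ need not split the power map cleanly) yields that $\exp(\nu(G))$ divides $p^{e+1}$; this is exactly the analogue of Theorem A's "$p \cdot \exp(G)$" bound on $\exp(\nu(G))$ promised in the abstract.

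For part (b), the stronger hypothesis $\gamma_{p-2}(G) \le G^p$ is designed precisely to push the potency one level deeper: it should imply not merely that $[G,G^{\vfi}]$ is potently embedded in $\nu(G)$, but that $\nu(G)$ is itself a potent $p$-group. I would verify this by checking the defining inequality — for $p$ odd, $\gamma_{p-1}(\nu(G)) \le \nu(G)^p$, and for $p=2$, $\nu(G)' \le \nu(G)^4$ — using the commutator calculus in $\nu(G)$ (the defining relations let one expand $\gamma_{p-1}(\nu(G))$ in terms of $\gamma_j(G)$, $\gamma_j(G^{\vfi})$, and mixed terms $[G,_{j}G^{\vfi}]$), reducing each piece to the hypothesis $\gamma_{p-2}(G) \le G^p$ and the known behaviour of the tensor square. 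Once $\nu(G)$ is known to be potent, the exponent statement $\exp(\nu(G)) = p^e$ follows from the standard fact (see \cite{JJ}) that a potent $p$-group has the same exponent as its "generating" data — here $\exp(G)=p^e$ together with $\exp([G,G^{\vfi}]) \mid p^e$, the latter now available because part (b)'s hypothesis removes the extra factor of $p$ that appeared in part (a). The main obstacle throughout is the non-additivity of the power map: every step that says "$\exp$ of a quotient is small, so $\exp$ upstairs is small" requires a genuine commutator/Hall–Petrescu-type estimate valid in the (merely) potent setting, and assembling these estimates compatibly across the filtration of $\nu(G)$ is where the real work lies.
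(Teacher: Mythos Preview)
Your treatment of part (b) is essentially the paper's argument: show $\gamma_{p-1}(\nu(G))\le\nu(G)^p$ (the paper packages this as Proposition~\ref{prop.tec} applied with $n=p-2$) and then invoke the power-abelian property of potent $p$-groups from \cite{JJ}, together with the fact that $\nu(G)$ is generated by $G\cup G^{\vfi}$, to read off $\exp(\nu(G))=p^e$.

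Part (a), however, has a genuine gap. Your plan is to bound $\exp([G,G^{\vfi}])$ and $\exp(\nu(G)/[G,G^{\vfi}])=\exp(G\times G)=p^e$, and then ``count the loss of one factor of $p$ at the top''. But the extension structure alone gives nothing close to $p^{e+1}$: even if you knew $\exp([G,G^{\vfi}])\mid p^e$ (which you have not established in the potent case), a group with a normal subgroup of exponent $p^e$ and quotient of exponent $p^e$ can have exponent as large as $p^{2e}$. The heuristic that only one factor of $p$ is lost is exactly what needs to be \emph{proved}, and your filtration sketch does not supply the mechanism. Saying the sections ``behave powerfully enough'' that $x\mapsto x^{p^e}$ is controlled is restating the goal, not achieving it; in part (a) the group $\nu(G)$ is \emph{not} potent, so the usual Hall--Petrescu simplifications for potent groups are not directly available.

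The paper takes a completely different route for (a). It does not use the extension $[G,G^{\vfi}]\trianglelefteq\nu(G)$ at all. Instead, from Theorem~A(b) it extracts the single inequality
\[
\gamma_{2(p-1)}(\nu(G))=[\gamma_p(\nu(G)),\,_{p-2}\,\nu(G)]\le\gamma_p(\nu(G))^p,
\]
and then appeals to the Omega-subgroup theorem of Fern\'andez-Alcober, Gonz\'alez-S\'anchez and Jaikin-Zapirain \cite{JJJ}: if $\gamma_{k(p-1)}(P)\le\gamma_r(P)^{p^s}$ with $k(p-1)<r+s(p-1)$, then $\exp(\Omega_i(P))\le p^{\,i+k-1}$. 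Taking $k=2$, $r=p$, $s=1$ gives $\exp(\Omega_e(\nu(G)))\le p^{e+1}$, and since $\nu(G)=\langle G,G^{\vfi}\rangle$ is generated by elements of order dividing $p^e$, one has $\nu(G)=\Omega_e(\nu(G))$ and the bound follows. This external input is precisely what replaces the missing ``one factor of $p$'' argument in your proposal.
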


The following results are immediate consequences of Theorem B. 

\begin{cor}
Let $p \geqslant 5$ be a prime and $G$ a finite powerful $p$-group. Then $\nu(G)$ is a potent $p$-group. In particular,  $\exp(\nu(G))=\exp(G)$.  
\end{cor}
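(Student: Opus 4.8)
\textit{Proof proposal.} The plan is to obtain the corollary directly from Theorem B(b); the only thing that needs checking is that a finite powerful $p$-group with $p \geqslant 5$ satisfies the hypothesis $\gamma_{p-2}(G) \leqslant G^p$ of that theorem. First I would observe that since $p \geqslant 5$ we have $p-2 \geqslant 3 \geqslant 2$, so monotonicity of the lower central series gives $\gamma_{p-2}(G) \leqslant \gamma_2(G) = G'$. Next, because $G$ is powerful and $p$ is odd, by definition $G' \leqslant G^p$. Combining the two inclusions yields $\gamma_{p-2}(G) \leqslant G^p$, which is precisely the hypothesis required to invoke Theorem B(b).

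Applying Theorem B(b), I conclude that $\nu(G)$ is a potent $p$-group and, writing $\exp(G) = p^e$, that $\exp(\nu(G)) = p^e = \exp(G)$. (That $\nu(G)$ is itself a finite $p$-group whenever $G$ is finite $p$-group is part of the standard structure theory of $\nu(G)$ — it is an extension of the finite $p$-group $[G,G^{\varphi}]$ by $G \times G$, as recalled in the abstract — so no separate argument is needed on that point.)

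I do not expect any genuine obstacle: the corollary is essentially a one-line consequence of Theorem B once the numerical inequality $p-2 \geqslant 2$ is noted. The only delicate point, worth a short remark, is that the argument truly requires $p \geqslant 5$: for $p = 3$ the hypothesis of Theorem B(b) would demand $\gamma_1(G) = G \leqslant G^p$, which is false, and for $p = 2$ the powerful/potent conditions are phrased with $G^4$ rather than $G^p$, so the powerful assumption alone need not suffice. Hence the restriction to primes $p \geqslant 5$ is exactly what makes $\gamma_{p-2}(G) \leqslant G'$ hold and the powerful condition $G' \leqslant G^p$ usable.
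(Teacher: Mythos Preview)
Your proposal is correct and matches the paper's approach: the paper states this corollary as an immediate consequence of Theorem~B, and your argument spells out precisely the one-line verification needed, namely that for $p\geqslant 5$ one has $\gamma_{p-2}(G)\leqslant G'\leqslant G^p$ so that Theorem~B(b) applies directly.
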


\begin{cor}
Let $p$ be a prime and $G$ a finite potent $p$-group. Then the $\exp(M(G))$ and $\exp(\mu(G))$ divide $p \cdot \exp(G)$.
\end{cor}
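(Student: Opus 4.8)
The plan is to deduce this directly from Theorem B(a) together with the standard position of $\mu(G)$ and $M(G)$ relative to $\nu(G)$. First I would recall that, under the identification $G \otimes G \cong [G,G^{\vfi}] \leqslant \nu(G)$ explained in the introduction (via \cite[Proposition 2.6]{NR1}), the group $\mu(G)$ — the kernel of the derived map $g \otimes h \mapsto [g,h]$ — corresponds to the kernel of the homomorphism $[G,G^{\vfi}] \to G'$ given by $[g,h^{\vfi}] \mapsto [g,h]$. An isomorphism carries the kernel of the derived map onto the kernel of its counterpart, so under this identification $\mu(G)$ is an honest subgroup of $\nu(G)$; in particular $\exp(\mu(G))$ divides $\exp(\nu(G))$.

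Next, since $G$ is a finite potent $p$-group with $\exp(G)=p^e$, Theorem B(a) gives that $\exp(\nu(G))$ divides $p^{e+1}=p\cdot\exp(G)$. Combining this with the previous paragraph yields that $\exp(\mu(G))$ divides $p\cdot\exp(G)$.

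Finally, for the Schur multiplier I would invoke the short exact sequence $1 \to \Delta(G) \to \mu(G) \to H_2(G) \to 1$ recalled in the introduction, together with Miller's isomorphism $H_2(G)\cong M(G)$. Thus $M(G)$ is a homomorphic image of $\mu(G)$, so $\exp(M(G))$ divides $\exp(\mu(G))$, and therefore also divides $p\cdot\exp(G)$. This completes the argument.

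There is no real obstacle here: all the substance is contained in Theorem B(a), and the remainder is bookkeeping about where $\mu(G)$ and $M(G)$ sit inside, respectively as a quotient of, the group $\nu(G)$. The only point deserving a sentence of care is confirming that the identification $G\otimes G\cong[G,G^{\vfi}]$ genuinely realizes $\mu(G)$ as a subgroup of $\nu(G)$ — which, as noted above, is immediate.
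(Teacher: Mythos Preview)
Your argument is correct and matches the paper's approach: the paper states this corollary as an immediate consequence of Theorem B without further proof, and what you have written is precisely the standard unpacking of that claim, using that $\mu(G)\leqslant [G,G^{\vfi}]\leqslant \nu(G)$ and that $M(G)$ is a quotient of $\mu(G)$.
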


Now, we study the weak commutativity construction of powerful $p$-groups. As before,  $G^{\varphi}$ denotes an isomorphic copy of 
$G$ via $\varphi : G \rightarrow
G^{\varphi}$, $g \mapsto g^{\varphi}$, for all $g \in G$. The following group construction was introduced
and studied in \cite{Sidki} $$ \chi(G) = \langle G \cup G^{\varphi} \mid [g,g^{\varphi}]=1, \forall g \in G \rangle.$$ The weak commutativity group $\chi (G)$ maps onto $G$ by $g \mapsto g$, $g^{\varphi }\mapsto g$ with kernel $L(G)=\left\langle g^{-1}g^{\varphi } \mid g\;\in G\right\rangle$, and it maps onto $G \times G$ by $g\mapsto \left( g,1\right) ,g^{\varphi}\mapsto \left( 1,g\right) $ with kernel $D(G)=[G,G^{\varphi }]$. It is an important fact that $L(G)$ and $D(G)$ commute. Define $T(G)$ to be the subgroup of $G\times G\times G$ generated by $\{(g,g,1),(1,g,g)\mid g\in G\}$. Then $\chi (G)$ maps onto $T(G)$ by $g\mapsto \left( g,g,1\right)$, $g^{\varphi }\mapsto \left( 1,g,g\right) $, with kernel $W(G)=L(G)\cap D(G)$, an abelian group. In particular, the quotient $\chi(G)/W(G)$ is isomorphic to a subgroup of $G \times G \times G$. A further normal subgroup of $\chi(G)$ is $R(G) = [G,L(G),G^{\varphi}]$, where the quotient $W(G)/R(G)$ is isomorphic to the Schur multiplier $M(G)$. Moreover, in \cite{NR1,NR2}, it was proved that the constructions $\chi(G)$ and $\nu(G)$ have large isomorphic quotients. More precisely, $$\dfrac{\nu(G)}{\Delta(G)} \cong \dfrac{\chi(G)}{R(G)}.$$ 
See \cite[Remark 2]{NR1} and \cite[Remark 4]{NR2}) for more details. The group $\chi(G)$ inherits many properties of the argument $G$; for instance, if $G$ is a finite $\pi$-group ($\pi$ a set of primes), nilpotent, solvable, locally nilpotent, or polycyclic-by-finite group, then so is $\chi(G)$ \cite{Sidki,Roc82,GRS,LO}.  

The following result is an extension of Moravec's results \cite{M09} in the context of the weak commutativity construction.    

\begin{thmC} 
Let $p$ be an odd prime and $G$ a powerful $p$-group. Then the subgroups $\chi(G)'$ and  $D(G)$  are powerfully embedded in  $\chi(G)$.
\end{thmC}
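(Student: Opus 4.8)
The plan is to deduce Theorem~C from Moravec's theorem for $\nu(G)$ by means of the isomorphism $\nu(G)/\Delta(G)\cong\chi(G)/R(G)$, and then to absorb the error term $R(G)$ using the hypothesis $G'\leqslant G^{p}$. One works throughout inside the finite $p$-group $\chi(G)$, keeping in mind the standard structural facts: $D(G)=[G,G^{\varphi}]$ and $L(G)$ are normal in $\chi(G)$, $[L(G),D(G)]=1$, $R(G)\leqslant W(G)=L(G)\cap D(G)\leqslant D(G)\leqslant\chi(G)'$, and $\chi(G)/D(G)\cong G\times G$. Note that $G\times G$ is again a powerful $p$-group, so its derived subgroup $(G\times G)'=G'\times G'$ is powerfully embedded in it \cite{D} (indeed $[\gamma_{2}(G)\times\gamma_{2}(G),\,G\times G]=\gamma_{3}(G)\times\gamma_{3}(G)\leqslant\gamma_{2}(G)^{p}\times\gamma_{2}(G)^{p}$ by powerfulness). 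It is also worth recording the commutator identities of $\chi(G)$ that flow from $[g,g^{\varphi}]=1$: modulo $R(G)$ these are exactly the defining relations of $\nu(G)$, which is why Moravec's calculations transport to $\chi(G)$ up to $R(G)$.

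\emph{Step 1: $D(G)$ is powerfully embedded in $\chi(G)$.} Since $[G,G^{\varphi}]$ is powerfully embedded in $\nu(G)$ (Moravec) and powerful embedding passes to quotients, $[G,G^{\varphi}]\Delta(G)/\Delta(G)$ is powerfully embedded in $\nu(G)/\Delta(G)$; transporting along $\nu(G)/\Delta(G)\cong\chi(G)/R(G)$, which carries this subgroup onto $D(G)/R(G)$ (as $R(G)\leqslant D(G)$), we get $[D(G),\chi(G)]\leqslant D(G)^{p}R(G)$. It remains to prove $R(G)\leqslant D(G)^{p}$. First I would establish the reduction $[G,L(G)]\leqslant W(G)$: the image of $[G,L(G)]\leqslant L(G)$ in $\chi(G)/D(G)\cong G\times G$ lies in the commutator of $G\times 1$ with the antidiagonal $\{(g^{-1},g):g\in G\}$, hence in $G'\times 1$, which meets the antidiagonal trivially, so $[G,L(G)]$ is killed in $\chi(G)/D(G)$ and therefore lies in $L(G)\cap D(G)=W(G)$. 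Consequently $R(G)=[[G,L(G)],G^{\varphi}]\leqslant[W(G),G^{\varphi}]$, a subgroup of the abelian group $W(G)$ that moreover sits inside $[D(G),\chi(G)]$. One then bounds $R(G)$ by $D(G)^{p}$ through a collection-process computation which, using $G'\leqslant G^{p}$ and the fact that in a powerful $p$-group every element of $G^{p}$ is a genuine $p$-th power (this is where $p$ odd is essential), trades the relevant commutators for $p$-th powers of elements of $D(G)$; the apparent circularity --- the bound on $[D(G),\chi(G)]$ seeming to feed on itself through $R(G)\leqslant[D(G),\chi(G)]$ --- is resolved by a standard induction on the nilpotency class of $\chi(G)$, as in the usual treatment of powerful $p$-groups (cf.\ \cite{D}). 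This yields $[D(G),\chi(G)]\leqslant D(G)^{p}$.

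\emph{Step 2: $\chi(G)'$ is powerfully embedded in $\chi(G)$.} Since $\chi(G)/D(G)\cong G\times G$ is powerful, $\chi(G)'D(G)/D(G)=(\chi(G)/D(G))'$ is powerfully embedded, whence $[\chi(G)',\chi(G)]\leqslant(\chi(G)')^{p}D(G)$; so it suffices to show that each factor of $[\chi(G)',\chi(G)]$ lies in $(\chi(G)')^{p}$. Writing $\chi(G)'=G'(G')^{\varphi}D(G)$ with $D(G)$ normal, the contribution $[G',G]=\gamma_{3}(G)$ lies in $\gamma_{2}(G)^{p}\leqslant(\chi(G)')^{p}$ by powerfulness; the contribution $[D(G),\chi(G)]$ lies in $D(G)^{p}\leqslant(\chi(G)')^{p}$ by Step~1; and the mixed contributions, such as $[G',G^{\varphi}]\leqslant[G^{p},G^{\varphi}]$, are placed inside $D(G)^{p}\leqslant(\chi(G)')^{p}$ by the same collection computation as in Step~1, now run without circularity because $D(G)$ is already known to be powerfully embedded. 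The $\varphi$-symmetric contributions are handled identically, and assembling everything gives $[\chi(G)',\chi(G)]\leqslant(\chi(G)')^{p}$.

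The entire weight of the proof lies in the bound $R(G)\leqslant D(G)^{p}$ of Step~1. What makes it delicate is that $\chi(G)$, in contrast to $\nu(G)$, lacks the full family of ``folding'' relations $[g,h^{\varphi}]^{k}=[g,h^{\varphi}]^{k^{\varphi}}$, so $[D(G),G]$ and $[D(G),G^{\varphi}]$ cannot be identified and the triple commutator $[G,L(G),G^{\varphi}]$ has to be pinned down by hand; the reduction $[G,L(G)]\leqslant W(G)$, the powerful-$p$-group identity $G^{p}=\{g^{p}:g\in G\}$, and the induction on nilpotency class are the ingredients that carry it through. Everything else is routine bookkeeping.
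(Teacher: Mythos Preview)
Your strategy---transport Moravec's result along $\nu(G)/\Delta(G)\cong\chi(G)/R(G)$ and then absorb $R(G)$ into $D(G)^{p}$---is different from the paper's, which works directly inside $\chi(G)$ without reference to $\nu(G)$ at all. But there is a genuine error in your Step~1.

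The claim $[G,L(G)]\leqslant W(G)$ is false. Your argument says the image of $[G,L(G)]$ in $\chi(G)/D(G)\cong G\times G$ lies in $G'\times 1$ and also in ``the antidiagonal'', and that these meet trivially. But the image of $L(G)$ in $G\times G$ is not the set $\{(g^{-1},g):g\in G\}$ (which is not even a subgroup when $G$ is non-abelian); it is the full normal subgroup $\{(a,b):ab\in G'\}$, and this \emph{contains} $G'\times 1$. Concretely, for $g,h\in G$ one has
\[
[g,\,h^{-1}h^{\varphi}]=[g,h^{\varphi}]\,[g,h^{-1}]^{h^{\varphi}}\equiv[g,h^{-1}]\pmod{D(G)},
\]
so $[G,L(G)]$ maps onto $G'\times 1$ in $G\times G$, not to $1$. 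Hence $[G,L(G)]\not\leqslant D(G)$ whenever $G$ is non-abelian, and your reduction $R(G)\leqslant[W(G),G^{\varphi}]$ collapses.

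Even setting this aside, the decisive inequality $R(G)\leqslant D(G)^{p}$ is only gestured at (``collection-process computation\ldots induction on the nilpotency class''). If you actually try to carry it out, you are forced to bound $[G',G^{\varphi}]\leqslant[G^{p},G^{\varphi}]$ by $D(G)^{p}$ via Hall's formula under suitable reductions---and that is precisely the paper's direct argument, so the $\nu(G)$ detour buys nothing. The paper proceeds as follows: using Lemma~\ref{lem.normal} one may assume $\gamma_{2}(\chi(G))^{p}=\gamma_{4}(\chi(G))=1$; then $G$ has class at most $2$, and Rocco's lemma \cite[Lemma~3.2.1]{Roc82} gives $\gamma_{3}(\chi(G))\leqslant[G',G^{\varphi}]\leqslant[G^{p},G^{\varphi}]$. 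Hall's formula now yields $[x^{p},y^{\varphi}]\equiv[x,y^{\varphi}]^{p}=1$ modulo $\gamma_{2}(M)^{p}\gamma_{p}(M)$ with $M=\langle x,[x,y^{\varphi}]\rangle$, and both error terms vanish under the standing assumptions (here $p\geqslant 3$ is used so that $\gamma_{p}(M)\leqslant\gamma_{4}(\chi(G))$). The $D(G)$ case is handled identically after invoking \cite[Proposition~4.1.4]{Sidki} to reduce $(G')^{p}=1$ from $D(G)^{p}=1$.
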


The above theorem is no longer valid if we drop the assumption that $p$ is an odd prime (see Remark \ref{rem.odd}, below).

The paper is organized as follows. In Section 2 we summarize without proofs some results on finite $p$-groups. In the third section we prove Theorems A and B. The proof of Theorem C is given in  Section 4.

\section{Finite $p$-groups}

Let $G$ be a $p$-group. Consider the following subgroups of $G$: $\Omega_n(G)=\langle g\in G \ | \ g^{p^n}=1 \rangle$ and $G^{p^{n}}=\langle g^{p^{n}} \ | \ g\in G \rangle$. If $G$ is abelian, we have the following simple description of these subgroups:
\begin{enumerate}
    \item[(1)]  $\Omega_n(G)=\{ g\in G \ | \ g^{p^n}=1 \}$.
    \item[(2)]  $G^{p^{n}}=\{ g^{p^{n}} \ | \ g\in G \}$.
\end{enumerate}
Moreover, we have that:
\begin{enumerate}
     \item[(3)]  $|G^{p^{n}}|=|G:\Omega_n(G)|$ for all $n$.
\end{enumerate}

A finite $p$-group $G$ is called power abelian if it satisfies these three conditions for all $n$. It is a very interesting problem to determine which groups are power abelian. Recently, it was proved in \cite{JJ} that potent $p$-groups are also power abelian, for odd primes.

The next result are basic facts about finite $p$-groups (see for instance  \cite{JJ}).

\begin{lem}\label{lem.normal}
Let $G$ be a finite $p$-group and $N$, $M$ normal subgroups of $G$. If $N\leq 
M[N,G]N^p$ then $N \leq M$
\end{lem}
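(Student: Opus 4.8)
The plan is to exploit the standard induction-on-order machinery for finite $p$-groups, reducing to the well-known fact that a finitely generated module-type quotient is killed by the Frattini-type argument. Concretely, since $N$ is a finite $p$-group, it suffices to show that the hypothesis forces $N \leq M$ after passing to a quotient where $M$ has been trivialized; that is, I would first replace $G$ by $G/M$ (legitimate since $M \trianglelefteq G$ and $N$, $MN$ remain normal modulo $M$), so that the inequality becomes $N \leq [N,G]N^p$, and the goal becomes $N = 1$. Note $[N,G]N^p$ is precisely the product defining the "Frattini-type" subgroup of $N$ relative to the $G$-action; in particular $[N,G]N^p \leq \Phi(N) \cdot$(something), but more to the point $N/[N,G]N^p$ is an abelian group of exponent dividing $p$, i.e. an elementary abelian $p$-group.

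The key step is then the observation that $N \leq [N,G]N^p$ says exactly that this elementary abelian quotient $\bar N := N/[N,G]N^p$ is trivial. If $N \neq 1$, then because $N$ is a nontrivial normal subgroup of the $p$-group $G$, standard $p$-group theory gives $N \cap Z(G) \neq 1$ and, more usefully, $[N,G] < N$ (the lower central action of a $p$-group is nilpotent, so $[N,\,_iG] = 1$ for some $i$, hence $[N,G]$ cannot equal $N$ unless $N=1$). Combined with $N^p \leq \Phi(N)$, one gets $[N,G]N^p \leq [N,G]\Phi(N) < N$ whenever $N \neq 1$ — here I use that $[N,G] \leq \Phi(N)\cdot[N,G]$ sits inside a proper subgroup: more carefully, $[N,G]N^p$ is contained in the proper subgroup $\langle [N,G], \Phi(N)\rangle$, which is proper because modulo $\Phi(N)$ the image of $[N,G]$ is the image of $[N/\Phi(N), G]$, and $N/\Phi(N)$ is a nontrivial $G$-module over $\mathbb{F}_p$ hence has nontrivial $G$-coinvariants. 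This contradicts $N \leq [N,G]N^p$, forcing $N = 1$, i.e. $N \leq M$ in the original setting.

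The main obstacle is making precise the claim that $[N,G]N^p$ is a proper subgroup of $N$ when $N \neq 1$: one must combine the nilpotency of the $G$-action on the normal subgroup $N$ (so that $[N,G] \neq N$) with the elementary abelian nature of $N/[N,G]N^p$. The cleanest route is to work in $\bar N = N/[N,G]N^p$ directly: it is an elementary abelian $p$-group on which $G$ acts trivially (since $[\bar N, G] = 1$ by construction and $\bar N^p = 1$), so the only way the hypothesis $N = [N,G]N^p$ can hold is $\bar N = 1$; but by the standard fact that a nontrivial normal subgroup of a finite $p$-group $G$ has a nontrivial quotient on which $G$ acts trivially with exponent $p$ (take the image in $N/(\Phi(N)[N,G])$), we conclude $\bar N \neq 1$ unless $N = 1$. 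Hence $N = 1$ modulo $M$, giving $N \leq M$ as claimed. (In the literature this is exactly the argument that the Frattini-type subgroup $[N,G]N^p$ is "small", and the lemma is its contrapositive packaging.)
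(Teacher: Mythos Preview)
Your argument is correct. The paper does not actually prove this lemma: it is stated as a basic fact about finite $p$-groups with a reference to \cite{JJ}, so there is no proof in the paper to compare against.

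Your approach is the standard one. It can be stated more crisply: after passing to $G/M$ you must show that $N \leq [N,G]N^p$ forces $N=1$. If $N\neq 1$, nilpotency of the finite $p$-group $G$ gives $[N,G]<N$; then $N/[N,G]$ is a nontrivial finite abelian $p$-group, and $N/([N,G]N^p)$ is its Frattini quotient, hence also nontrivial, contradicting $N=[N,G]N^p$. Your write-up circles around this twice before landing on it; the middle paragraph about $\Phi(N)\cdot[N,G]$ and ``nontrivial $G$-coinvariants'' is unnecessary once you observe directly that $N/[N,G]N^p$ is the Frattini quotient of $N/[N,G]$.
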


The following theorem is known as P. Hall's collection formula.

\begin{thm}
Let $G$ be a $p$-group and $x, y$ elements of $G$. Then for any $k\geq 0$ we have  
$$(xy)^{p^k}\equiv x^{p^k}y^{p^k} (\text{ mod }\gamma_{2}(L)^{p^k}\gamma_{p}(L)^{p^{k-1}}\gamma_{p^2}(L)^{p^{k-2}}\gamma_{p^3}(L)^{p^{k-3}}\cdots \gamma_{p^k}(L)),$$
where $L=\langle x,y\rangle$. We also have that 
$$[x,y]^{p^k}\equiv [x^{p^k}, y] (\text{ mod }\gamma_{2}(M)^{p^k}\gamma_{p}(M)^{p^{k-1}}\gamma_{p^2}(M)^{p^{k-2}}\ldots \gamma_{p^k}(M)),$$
where $M=\langle x,[x,y]\rangle$.
\end{thm}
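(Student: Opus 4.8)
The plan is to obtain both congruences from the Hall--Petrescu collection formula: in any group, for every integer $n \geq 1$ and all $x,y$, there are elements $c_i \in \gamma_i(L)$ (depending on $x$, $y$ and $n$), where $L = \langle x, y\rangle$, such that
$$(xy)^n = x^n y^n\, c_2^{\binom{n}{2}} c_3^{\binom{n}{3}} \cdots c_{n-1}^{\binom{n}{n-1}} c_n .$$
First I would establish this identity. The cleanest route is to quote it (e.g.\ from P.\ Hall's original paper, or from Huppert, \emph{Endliche Gruppen}, III.9.4); alternatively one proves it by induction on $n$ via the collection process, carrying the new factor $xy$ in $(xy)^{n+1} = (xy)^n (xy)$ past the already collected block, each transposition of an $x$ past a $\gamma_i(L)$-term producing a $\gamma_{i+1}(L)$-correction, and then identifying the resulting exponents as the binomial coefficients by a polynomial-in-$n$ comparison. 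I expect making this collection bookkeeping fully rigorous to be the only genuine obstacle; once the identity is in hand, both congruences are short arithmetic arguments.

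For the first congruence, specialise $n = p^k$. Since $p^k - 1$ has all base-$p$ digits equal to $p-1$, Lucas' theorem gives $\binom{p^k-1}{i-1} \not\equiv 0 \pmod p$ for $1 \leq i \leq p^k$, and then the identity $i\binom{p^k}{i} = p^k\binom{p^k-1}{i-1}$ yields $v_p\!\binom{p^k}{i} = k - v_p(i)$, where $v_p$ denotes the $p$-adic valuation. Now fix $i$ with $2 \leq i \leq p^k$ and put $j = \lfloor \log_p i\rfloor$, so that $0 \leq j \leq k$ and $p^j \leq i < p^{j+1}$. If $j = 0$ then $c_i \in \gamma_i(L) \subseteq \gamma_2(L)$ and $v_p\!\binom{p^k}{i} = k$, so $c_i^{\binom{p^k}{i}} \in \gamma_2(L)^{p^k}$; if $j \geq 1$ then $c_i \in \gamma_i(L) \subseteq \gamma_{p^j}(L)$ and $v_p(i) \leq j$ gives $v_p\!\binom{p^k}{i} \geq k - j$, so $c_i^{\binom{p^k}{i}} \in \gamma_{p^j}(L)^{p^{k-j}}$. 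In every case $c_i^{\binom{p^k}{i}}$ lies in the subgroup $N := \gamma_2(L)^{p^k}\gamma_p(L)^{p^{k-1}} \cdots \gamma_{p^k}(L)$. Because each $\gamma_m(L)$ is normal in $L$ and the $p^s$-th powers of a normal subgroup generate a normal subgroup, $N$ is normal in $L$; hence it contains the product of all the $c_i^{\binom{p^k}{i}}$, and the collection formula gives $(xy)^{p^k} \equiv x^{p^k} y^{p^k} \pmod N$, which is the first assertion.

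For the second congruence I would substitute the pair $(x, [x,y])$ into the first one. Using $x^y = x[x,y]$ we have $[x^{p^k}, y] = x^{-p^k}(x^y)^{p^k} = x^{-p^k}(x[x,y])^{p^k}$. Applying the first congruence to $x$ and $[x,y]$ --- whose generated subgroup is $M = \langle x, [x,y]\rangle$ --- gives
$$(x[x,y])^{p^k} \equiv x^{p^k}[x,y]^{p^k} \pmod{\gamma_2(M)^{p^k}\gamma_p(M)^{p^{k-1}} \cdots \gamma_{p^k}(M)} .$$
Left-multiplying by $x^{-p^k} \in M$ and using that the modulus is a normal subgroup of $M$ (so that left translation by an element of $M$ respects the congruence), we obtain $[x^{p^k}, y] \equiv x^{-p^k}x^{p^k}[x,y]^{p^k} = [x,y]^{p^k}$ modulo the same subgroup, which is the second assertion.
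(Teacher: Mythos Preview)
Your argument is correct: the derivation of the first congruence from the Hall--Petrescu identity via the valuation formula $v_p\binom{p^k}{i}=k-v_p(i)$ is the standard one, and your reduction of the second congruence to the first by writing $[x^{p^k},y]=x^{-p^k}(x[x,y])^{p^k}$ is clean and valid.

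The only comparison to make is that the paper does not prove this theorem at all. It is stated as the classical P.~Hall collection formula and quoted as a known result, to be used as a tool in the subsequent proofs (notably in Proposition~\ref{prop.tec} and Theorem~C). So you have supplied a full proof where the paper simply appeals to the literature. If you want your write-up to be self-contained, the one place you flag as the ``only genuine obstacle'' --- the inductive bookkeeping that establishes the Hall--Petrescu identity itself --- would need to be written out; but for the purposes of this paper, citing Huppert III.9.4 (as you suggest) is entirely in the same spirit as what the authors do.
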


The next result is a consequence of P. Hall's formula.
\begin{lem} \label{lem.hallformula}
Let $G$ be a finite $p$-group and $N$, $M$ normal subgroups of $G$. Then $[N^p,M]\leq [N,M]^p[M,_pN]$.
\end{lem}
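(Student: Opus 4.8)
The plan is to reduce the inclusion to a congruence for a single commutator and then invoke P.~Hall's collection formula. Since $N$ and $M$ are normal in $G$, so are $[N,M]$, $[N,M]^p$, and each $[M,{}_kN]$; hence $K:=[N,M]^p[M,{}_pN]$ is a normal subgroup of $G$. Now $[N^p,M]$ is generated by the commutators $[a,b]$ with $a\in N^p$ and $b\in M$. Writing $a$ as a word in the generators $x^p$ $(x\in N)$ of $N^p$ and expanding by means of the identities $[a_1a_2,b]=[a_1,b]^{a_2}[a_2,b]$ and $[a^{-1},b]=([a,b]^{-1})^{a^{-1}}$, the normality of $K$ would reduce the whole statement to proving $[x^p,m]\in K$ for every $x\in N$ and every $m\in M$.

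So I would fix $x\in N$ and $m\in M$, put $c:=[x,m]$ and $H:=\langle x,c\rangle$, and note that $c\in[N,M]\leq N$, hence $H\leq N$. By the second congruence of Hall's collection formula, applied with $k=1$ and to the pair $x,m$,
\[
[x^p,m]\in [x,m]^p\,\gamma_2(H)^p\,\gamma_p(H).
\]
Clearly $[x,m]^p\in[N,M]^p$. Next I would show $\gamma_2(H)\leq[M,{}_2N]$: since $H/\gamma_2(H)$ is abelian, $\gamma_2(H)$ is generated by the $H$-conjugates of $[x,c]=[x,[x,m]]$, and $[x,[x,m]]\in[N,[M,N]]=[M,{}_2N]$, which is normal in $G$, so all those conjugates lie there. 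In particular $\gamma_2(H)\leq[M,{}_2N]\leq[N,M]$, whence $\gamma_2(H)^p\leq[N,M]^p$. Finally, using $H\leq N$ and induction on $k\geq2$ one gets $\gamma_{k+1}(H)=[\gamma_k(H),H]\leq[[M,{}_kN],N]=[M,{}_{k+1}N]$, and therefore $\gamma_p(H)\leq[M,{}_pN]$. Substituting these three containments into the displayed inclusion yields $[x^p,m]\in[N,M]^p[M,{}_pN]=K$, completing the argument.

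The main obstacle I anticipate is obtaining the full subscript $p$ in $[M,{}_pN]$ rather than $p-1$. This rests entirely on the sharp base case $\gamma_2(H)\leq[M,{}_2N]$ --- and not merely on $\gamma_2(H)\leq[N,M]=[M,{}_1N]$ --- which holds because the single generating commutator $[x,[x,m]]$ already sits at depth two in the chain $[M,{}_kN]$, since $[x,m]\in[M,N]$ and $x\in N$. With the weaker base case the induction would only deliver $\gamma_p(H)\leq[M,{}_{p-1}N]$, which is insufficient, so this is the step requiring care.
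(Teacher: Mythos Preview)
Your argument is correct. The paper itself does not prove this lemma: it merely states it as ``a consequence of P.~Hall's formula'' and moves on, so your proposal supplies exactly the details the paper omits, and along the same route (Hall's second collection congruence with $k=1$ applied to $x\in N$, $m\in M$, and $H=\langle x,[x,m]\rangle$). Your care about the base case $\gamma_2(H)\leq[M,{}_2N]$ rather than $[M,{}_1N]$ is precisely the point that makes the exponent come out as $[M,{}_pN]$ instead of $[M,{}_{p-1}N]$, and it is justified as you say because $[x,[x,m]]\in[[M,N],N]$.
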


In \cite[ Theorem 2.1]{JJ}, they prove the following useful lemma.
\begin{lem}\label{lem1}
If $G$ is a potent $p$-group, then $\gamma_{k+1}(G)\leqslant \gamma_k(G)^4$, for $p=2$, and $\gamma_{p-1+k}(G)\leqslant(\gamma_{k+1}(G))^p$, for $p\geq3$. 
\end{lem}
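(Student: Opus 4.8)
The plan is to prove both inclusions by induction on $k$, the engine being Lemma~\ref{lem.hallformula} (to manufacture the needed $p$-th power out of a deep term of the lower central series) and Lemma~\ref{lem.normal} (to absorb the leftover tail). It is convenient to treat $p$ odd and $p=2$ separately: for $p=2$ potency is just $G'\leqslant G^4$, so $G$ is powerful, and the assertion $\gamma_{k+1}(G)\leqslant\gamma_k(G)^4$ is precisely that each $\gamma_k(G)$ is powerfully embedded in $G$.

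Assume first that $p$ is odd and put $\gamma_1(G)=G$. For every $k\geqslant 1$ one has $\gamma_{p-2+k}(G)\leqslant\gamma_k(G)^p$: this is the definition of potency when $k=1$, and it is the induction hypothesis (the statement at $k-1$) when $k\geqslant 2$. Applying $[\,\cdot\,,G]$ and then Lemma~\ref{lem.hallformula} with $N=\gamma_k(G)$ and $M=G$ gives
\[
\gamma_{p-1+k}(G)=[\gamma_{p-2+k}(G),G]\leqslant[\gamma_k(G)^p,G]\leqslant\gamma_{k+1}(G)^p\,[G,_p\gamma_k(G)].
\]
The tail $[G,_p\gamma_k(G)]$ lies in $\gamma_{1+pk}(G)$, and since $(p-1)(k-1)\geqslant 0$ we have $1+pk\geqslant p+k$, so $[G,_p\gamma_k(G)]\leqslant\gamma_{p+k}(G)=[\gamma_{p-1+k}(G),G]$. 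Hence $\gamma_{p-1+k}(G)\leqslant\gamma_{k+1}(G)^p\,[\gamma_{p-1+k}(G),G]$, and Lemma~\ref{lem.normal} applied to the normal subgroups $N=\gamma_{p-1+k}(G)$ and $M=\gamma_{k+1}(G)^p$ yields $\gamma_{p-1+k}(G)\leqslant\gamma_{k+1}(G)^p$, which closes the induction.

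For $p=2$ I would again induct on $k$, the base case $k=1$ being the definition $G'\leqslant G^4$. The inductive step amounts to the bootstrapping fact that if $N$ is powerfully embedded in a powerful $2$-group $G$ then so is $[N,G]$, a standard result in the theory of powerful $p$-groups (see \cite{D}). Its proof follows the same pattern: one first notes that $N$ is itself powerful (since $[N,N]\leqslant[N,G]\leqslant N^4$), then writes $[N,G,G]\leqslant[N^4,G]$ and expands this via Lemma~\ref{lem.hallformula} and Hall's collection formula; the iterated-commutator error terms produced are reabsorbed into $N^4$ using that $N$ is powerful, and one concludes with Lemma~\ref{lem.normal}.

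The $p=2$ analysis is the point I expect to be genuinely delicate. For odd $p$ the error terms coming out of Hall's formula automatically fall two places lower in the lower central series — exactly what makes the final application of Lemma~\ref{lem.normal} legitimate — whereas for $p=2$ the naive weight count leaves them only in $\gamma_{k+1}(G)$, so one must lean on the stronger hypothesis (that the relevant subgroup is powerful, not merely powerfully embedded) to push them far enough down. The remaining manipulations are routine tracking of weights in the lower central series.
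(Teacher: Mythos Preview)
Your argument for odd $p$ is correct: the induction is set up properly, the application of Lemma~\ref{lem.hallformula} is clean, the weight count $1+pk\geqslant p+k$ is exactly what pushes the tail into $[\gamma_{p-1+k}(G),G]$, and Lemma~\ref{lem.normal} then closes the loop. For $p=2$ you correctly identify the statement as the standard fact that the $\gamma_k(G)$ are powerfully embedded in a powerful $2$-group, and deferring to \cite{D} is appropriate.

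There is, however, nothing to compare against: the paper does not supply its own proof of this lemma. It is stated as a quotation of \cite[Theorem~2.1]{JJ} and used as a black box. So your write-up is not an alternative to the paper's argument but rather a self-contained substitute for the external citation, built out of the two auxiliary lemmas the paper already records. That is a perfectly reasonable thing to include, and indeed your odd-$p$ proof is essentially how the result is established in \cite{JJ}; just be aware that in the paper as written this lemma is imported, not proved.
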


\section{Proofs of Theorems A and B}

For the convenience of the reader we repeat some relevant definitions in the context of the non-abelian tensor square (cf.  \cite{NR2}). Recall that there is an epimorphism $\rho: \nu(G) \to G$, given by $g \mapsto g$, $h^{\vfi} \mapsto h$, which induces the derived map $\rho':[G,G^{\vfi}] \to G'$, $[g,h^{\vfi}] \mapsto [g,h]$, for all $g,h \in G$. In the notation of \cite[Section 2]{NR2}, let $\mu(G)$ denote the kernel of $\rho'$, a central subgroup of $\nu(G)$. In particular, $$\dfrac{[G,G^{\vfi}]}{\mu(G)} \cong G'.$$

The next proposition will be used in the proofs of Theorems A and B. 

\begin{prop}\label{prop.tec}
Let $p\geq3$ be a prime and $n\in \mathbb{N}$ such that $1<n<p$. Suppose that $G$ is a finite $p$-group such that $\gamma_n(G)\leqslant G^p$. Then $\gamma_{n+1}(\nu(G))\leqslant\gamma_2(\nu(G))^p$.
\end{prop}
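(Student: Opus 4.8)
The plan is to reduce the statement about $\nu(G)$ to the known structure of $G$ together with the commutator identities built into the definition of $\nu(G)$. Recall that $\nu(G)$ fits into an extension with kernel $[G,G^{\vfi}]$ over $G\times G$, and that for $x,y,z\in G$ one has the defining relations $[x,y^{\vfi}]^{z}=[x^{z},(y^{z})^{\vfi}]$ and the mixed relation allowing $z^{\vfi}$ in place of $z$. The first step is to describe generators of $\gamma_{n+1}(\nu(G))$ modulo $\gamma_2(\nu(G))^p$. Since $\nu(G)$ is generated by the images of $G$ and $G^{\vfi}$, the subgroup $\gamma_{n+1}(\nu(G))$ is generated (as a normal subgroup, hence modulo higher commutators it suffices to take it as an ordinary subgroup up to the usual basic-commutator bookkeeping) by left-normed commutators of weight $n+1$ in elements of $G\cup G^{\vfi}$. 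Using the epimorphism $\rho\colon\nu(G)\to G$ with kernel containing no part of $\gamma_2$ beyond what $G$ sees, together with the commutator calculus in $\nu(G)$ developed in \cite{NR1,NR2}, one shows that each such weight-$(n+1)$ commutator lies, modulo $\gamma_2(\nu(G))^p$, in the normal closure of elements of the form $[a_1,a_2,\dots,a_{n+1}]$ where at most one of the $a_i$ carries a $\vfi$; a standard argument (moving the $\vfi$ to the outermost slot via the relations $[g_1,g_2^{\vfi}]=[g_1,g_2]^{?}$-type identities and collecting) handles the cases with more $\vfi$'s or absorbs them into $\gamma_2(\nu(G))^p$.

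The second and central step is the following. Because $\gamma_n(G)\le G^p$ by hypothesis, any weight-$n$ commutator $c=[g_1,\dots,g_n]$ in $G$ is a product of $p$-th powers in $G$. Lift this to $\nu(G)$: the corresponding weight-$n$ commutator $\tilde c=[g_1,\dots,g_n]$ in $\nu(G)$ then differs from a product of $p$-th powers in $\nu(G)$ by an element of $\mu(G)$ (the kernel of $\rho'$), which is central in $\nu(G)$. Now take one more commutator with a generator $a\in G\cup G^{\vfi}$: since $\mu(G)$ is central, $[\tilde c\cdot(\text{product of }p\text{-th powers}),a]=[(\text{product of }p\text{-th powers}),a]$, and by Lemma~\ref{lem.hallformula} applied with $N=\langle\text{the relevant element}\rangle^{\nu(G)}$ and $M=\gamma_2(\nu(G))$ (or directly to suitable normal subgroups), $[N^p,M]\le[N,M]^p[M,{}_pN]$. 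The first factor $[N,M]^p$ lands in $\gamma_2(\nu(G))^p$; for the second factor $[M,{}_pN]=[\gamma_2(\nu(G)),{}_pN]$ one uses that $p\ge n+1$ (indeed $n<p$), so a $p$-fold commutator with $N$ starting from $\gamma_2$ has weight at least $p+1\ge n+2$, hence is again inside $\gamma_{n+1}(\nu(G))$ and can be fed back into an induction — or, more cleanly, one invokes Lemma~\ref{lem1} in $\nu(G)$ once the base case $\gamma_n(\nu(G))\le$ (something $p$-powerful) is in hand. Iterating / closing the induction gives $\gamma_{n+1}(\nu(G))\le\gamma_2(\nu(G))^p$.

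I expect the main obstacle to be the bookkeeping in the first step: controlling weight-$(n+1)$ commutators in $\nu(G)$ that involve several $\vfi$-tagged entries. The defining relations of $\nu(G)$ let one conjugate $[g_1,g_2^{\vfi}]$ by $g_3$ or by $g_3^{\vfi}$ with the same effect, which is what makes $[G,G^{\vfi}]$ behave like a quotient of $G\otimes G$; translating this into the statement ``a weight-$(n+1)$ commutator with $\ge 2$ copies of $\vfi$ is, mod $\gamma_2(\nu(G))^p$, equivalent to one with $\le 1$ copy'' requires the Hall–Witt identity together with the fact that $\gamma_n(G)\le G^p$ forces certain subcommutators in $G$ to be $p$-th powers, which then (via Lemma~\ref{lem.hallformula}) get absorbed. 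Once that normal form is established, the second step is essentially the argument of \cite{M09,JJ} transplanted into $\nu(G)$, using only that $\mu(G)$ is central and that $n<p$ keeps the ${}_p$-commutators harmless. A secondary technical point is making sure Lemma~\ref{lem.normal} or Lemma~\ref{lem.hallformula} is applied to genuinely \emph{normal} subgroups of $\nu(G)$; since $\gamma_k(\nu(G))$ and $[G,G^{\vfi}]$ are normal and $p$-th power subgroups of normal subgroups are normal, this is routine but should be stated carefully.
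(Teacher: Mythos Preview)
Your outline has the right shape, but Step~1 contains a genuine gap and Step~2 carries an unnecessary detour.

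The gap is precisely the ``bookkeeping'' you flag: reducing an arbitrary weight-$(n+1)$ commutator in $G\cup G^{\vfi}$ to one with at most one $\vfi$, placed in the outermost slot, is \emph{not} a routine Hall--Witt collection. What is actually needed is the structural result \cite[Theorem~3.1]{NR1},
\[
\gamma_k(\nu(G)) \;=\; \gamma_k(G)\,\gamma_k(G^{\vfi})\,[\gamma_{k-1}(G),G^{\vfi}]\qquad(k\geqslant 2),
\]
which is exactly the normal form you are after and is a theorem specific to $\nu(G)$, not a generic commutator identity. Your suggestion that the extra-$\vfi$ cases get ``absorbed into $\gamma_2(\nu(G))^p$'' has no support without this. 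A second issue: your appeal to Lemma~\ref{lem1} \emph{in $\nu(G)$} is circular, since that lemma requires the ambient group to be potent, which is essentially what the proposition is a step towards.

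Once Rocco's formula is in hand, Step~2 also simplifies: the inner weight-$n$ part lies literally in $\gamma_n(G)\subset G$ (or in $G^{\vfi}$), so it is in $G^p$ outright --- there is no lifting and no $\mu(G)$ correction; that passage is a red herring. The paper's argument is then direct: pass to the quotient by $\gamma_2(\nu(G))^p$; there $(G')^p=1$, and from $\gamma_{n+1}(G)\leqslant[G^p,G]$ together with Hall's formula and Lemma~\ref{lem1} (applied in $G$, which \emph{is} potent since $n\leqslant p-1$) one gets $\gamma_{n+1}(G)=1$. Rocco's formula then forces $\gamma_{n+2}(\nu(G))=1$, so $\nu(G)$ has class at most $n+1$, and $\gamma_{n+1}(\nu(G))=[\gamma_n(G),G^{\vfi}]\leqslant[G^p,G^{\vfi}]$. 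An element-wise Hall computation of $[x^p,y^{\vfi}]$ finishes: the error terms $\gamma_2(M)^p$ and $\gamma_p(M)$ vanish by the reduction and the class bound (using $p+1\geqslant n+2$). Your feedback loop via Lemma~\ref{lem.normal} could in principle be closed once Rocco's formula is available, but the paper's explicit nilpotency bound on $\nu(G)$ is cleaner and avoids the induction.
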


\begin{proof}
Clearly, we can assume that $(\gamma_2(\nu(G)))^{p}=[\nu(G), \nu(G)]^p=1$. In particular, we have that $[G, G]^p=[G, G^\varphi]^p=1$. 

As $\gamma_n(G)\leqslant G^p$, we obtain that $\gamma_{n+1}(G)\leqslant[G^p, G]$. On the other hand, by the Phillip Hall's formula we know that $$[G^p, G]\equiv[G, G]^p  \ (\text{mod}\ \gamma_{p+1}(G)).$$
		
Now, by Lemma \ref{lem1}  $\gamma_{p+1}(G)\leqslant \gamma_{2}(\nu(G))^p$. Therefore $[G^p, G]\leqslant [G, G]^p=1$. As a consequence the nilpotency class of $G$ and $G^\varphi$ is at most $n$. Recall that $\gamma_{n+2}(\nu(G))=\gamma_{n+2}(G)\gamma_{n+2}(G^{\varphi})[\gamma_{n+1}(G),G^{\varphi}]$ by \cite[Theorem 3.1]{NR1}. Thus, we obtain that the nilpotency class of $\nu(G)$ is at most $n+1$.
			
We will show that in fact $\gamma_{n+1}(\nu(G))=1$. Again, by \cite[Theorem 3.1]{NR1}, $$\gamma_{n+1}(\nu(G))=\gamma_{n+1}(G)\gamma_{n+1}(G^{\varphi})[\gamma_n(G), G^{\varphi}].$$
			
Then $\gamma_{n+1}(\nu(G))=[\gamma_{n}(G), G^{\varphi}]\leqslant [G^p, G^{\varphi}]$. It is sufficient to prove that each generator of $[G^p, G^\varphi]$ is trivial. Let $x\in G$ and $y^\varphi\in G^{\varphi}$. Using the Phillip Hall's formula we have that 
$$[x^p, y^\varphi] \equiv\ [x, y^\varphi]^p (\text{mod}\ \gamma_2(M)^p\gamma_p(M)),$$
where $M=\langle x, [x, y^\varphi]\rangle$.
			
Note that $\gamma_p(M)\leqslant\gamma_{p+1}(\nu(G))$. Hence $\gamma_2(M)^p\leqslant \gamma_{2}(\nu(G))^p=1$ and $\gamma_p(M)\leqslant\gamma_{p+1}(\nu(G))\leqslant\gamma_{n+2}(\nu(G))=1$, since $p+1\geq n+2$.
			
Therefore $[x^p, y^{\varphi}]=1$ for any  $ x, y \in G$ and so $[G^p, G^{\varphi}]=1$ as desired.
\end{proof}

We are now in a position to prove Theorem A. 

\begin{proof}
(a). Recall that the definitions of powerful and potent $p$-groups coincide for $p=2$ and $3$. Using  Moravec's result \cite{M09}, the non-abelian tensor square $[G,G^{\varphi}]$ is powerful embedded in $\nu(G)$, when $G$ is a powerful $p$-group. Now, it remains to consider potent $p$-groups with $p\geq 5$. 

Hence we need to prove that $[[G, G^\varphi], _{p-2} \nu(G)]\leqslant [G, G^\varphi]^p$. Suppose that $[G, G^\varphi]^p=1$. Consider the derived map $\rho': [G,G^{\varphi}] \to G'$, given by $[g,h^{\varphi}] \mapsto [g,h]$. Since $[G, G^\varphi]^p=1$ and $Im(\rho')=G'$, we deduce that $(G')^p=1$. By Lemma \ref{lem1},  $\gamma_p(G)=1=\gamma_p(G^\varphi)$.
    
    Observe that $[[G, G^\varphi], _{p-2} \nu(G)]\leqslant \gamma_p(\nu(G))$ and, by \cite[Theorem 3.1]{NR1},  $\gamma_p(\nu(G))=\gamma_p(G)\gamma_p(G^\varphi)[\gamma_{p-1}(G), G^\varphi]$.   Therefore $\gamma_{p}(\nu(G))=[\gamma_{p-1}(G), G^\varphi]\leqslant [G^p, G^\varphi]$.

Let $x\in G$ and $y^\varphi\in G^{\varphi}$. Using the Phillip Hall's formula we have that 
$$[x^p, y^\varphi] \equiv\ [x, y^\varphi]^p (\text{mod}\ \gamma_2(M)^p\gamma_p(M)),$$
where $M=\langle x, [x, y^\varphi]\rangle$. Note that $\gamma_p(M)\leqslant\gamma_{p+1}(\nu(G))$. Hence $\gamma_2(M)^p\leqslant \gamma_{2}(\nu(G))^p=1$ and $\gamma_p(M)\leqslant\gamma_{p+1}(\nu(G))=1$. Consequently, $[x^p, y^{\varphi}]=1$ for any  $ x, y \in G$ and so $[G^p, G^{\varphi}]=1$. Therefore $[[G, G^\varphi], _{p-2} \nu(G)]=1$, as wished. \\  

(b). We will prove by induction on $k$. For $k=2$ apply Proposition \ref{prop.tec} for $n=p-1$. Suppose by induction hypothesis that $[\gamma_{k}(\nu(G)), _{p-2}\nu(G)] \leqslant (\gamma_{k}(\nu(G)))^p$. By Lemma \ref{lem.hallformula}, 
$$\begin{array}{lcl}
		[\gamma_{k+1}(\nu(G)), _{p-2}\nu(G)]& \leqslant & [(\gamma_k(\nu(G)))^p, \nu(G)]\\
		    \ & \leqslant & [\gamma_k(\nu(G)), \nu(G)]^p[\nu(G),\  _p\ \gamma_k(\nu(G))]\\
		    \ & \leqslant & (\gamma_{k+1}(\nu(G))^p[\gamma_{k+1}(\nu(G)),\  _{p-2}\ \nu(G), \nu(G)].
\end{array}$$
Therefore, by Lemma \ref{lem.normal} we have  $$[\gamma_{k+1}(\nu(G)), _{p-2}\nu(G)]\leqslant (\gamma_{k+1}(\nu(G))^p,$$ which is the desired conclusion.
\end{proof}

\begin{proof}[Proof of Theorem B] \  
\noindent (a). In \cite{JJJ} it was proved that if $P$ is a finite $p$-group such that $\gamma_{k(p-1)}(P)\leq \gamma_r(P)^{p^s}$ for some $r$ and $s$ such that $k(p-1)<r+s(p-1)$, then the exponent of $\Omega_i(P)$ is at most $p^{i+k-1}$ for all $i$.

Note that $\gamma_{2(p-1)}(\nu(G))=[\gamma_p(\nu(G)),_{p-2}\nu(G)]$. Then using Theorem A (b) we conclude that $\gamma_{2(p-1)}(\nu(G)) \leq \gamma_{p}(\nu(G))^p$. Now, by \cite{JJJ} we have that $\exp(\Omega_e(\nu(G)))=p^{e+1}$. In particular, $\exp(\nu(G))=p^{e+1}$ since $\nu(G)$ is generated by $G$ and $G^{\varphi}$. \\

\noindent (b). Since $\gamma_{p-2}(G) \leqslant G^{p}$, by Proposition \ref{prop.tec}, we conclude that $\gamma_{p-1}(\nu(G))\leq [\nu(G),\nu(G)]^p$ and so $\nu(G)$ is a potent $p$-group. By \cite{JJ} potent $p$-groups are power abelian $p$-groups, whenever $p$ is an odd prime. As $\nu(G)$ is generated by $G$ and $G^{\varphi}$ we have $\exp(\nu(G)) = \exp(G)$. The proof is complete. 
\end{proof}

\section{Proof of Theorem C}

For the convenience of the reader we repeat some relevant definitions in the context of the weak commutativity construction (cf.  \cite{Sidki}). Let $G$ be a group. The weak commutativity construction of $G$ is defined as $\chi(G) = \langle G \cup G^{\varphi} \mid [g,g^{\varphi}]=1, \forall g \in G \rangle$. Consider the subgroup $D(G) = [G,G^{\varphi}]$ of $\chi(G)$. 

\begin{thmC} 
Let $p$ be an odd prime and $G$ a powerful $p$-group. Then the subgroups $\chi(G)', \; D(G)$  are powerfully embedded in  $\chi(G)$.
\end{thmC}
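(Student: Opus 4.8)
The plan is to mimic Moravec's strategy for $\nu(G)$ in \cite{M09}, adapting it to the weak commutativity construction $\chi(G)$, and to use the structural facts about $\chi(G)$ recalled in the Introduction. Since powerful and potent $p$-groups coincide for $p=3$, and since for $p\geq 5$ a powerful $p$-group is in particular potent, one would like to run an argument parallel to the proof of Theorem A, but working inside $\chi(G)$ rather than $\nu(G)$. The first step is the standard reduction: to prove that $D(G)$ is powerfully embedded, i.e. $[D(G),\chi(G)]\leqslant D(G)^p$, it suffices to pass to the quotient by $D(G)^p$ and show that $[D(G),\chi(G)]=1$ there; similarly for $\chi(G)'$, one reduces modulo $\chi(G)'^{\,p}$ (equivalently $\gamma_2(\chi(G))^p$) and shows $\gamma_3(\chi(G))=1$. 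The key technical input is a commutator-calculus description of $\gamma_k(\chi(G))$ analogous to \cite[Theorem 3.1]{NR1} for $\nu(G)$; here one should invoke the known lower central series computations for $\chi(G)$ from \cite{Sidki, Roc82} (or deduce them from the isomorphism $\nu(G)/\Delta(G)\cong\chi(G)/R(G)$ together with control of $R(G)$ and $\Delta(G)$ in the relevant quotients).

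The core of the argument is then a Philip Hall collection-formula computation. After killing $D(G)^p$, one has $[G,G^\varphi]^p=1$; since $G$ is powerful, $G'\leqslant G^p$, and applying the epimorphism $\chi(G)\to G$ (or $\chi(G)\to G\times G$) one extracts that $(G')^p=1$, hence $G$ (and $G^\varphi$) has bounded nilpotency class by Lemma \ref{lem1}. One then shows that a typical generator $[x^p,y^\varphi]$ of the relevant piece of $\gamma_k(\chi(G))$ is trivial: by Hall's formula $[x^p,y^\varphi]\equiv [x,y^\varphi]^p$ modulo $\gamma_2(M)^p\gamma_p(M)$ with $M=\langle x,[x,y^\varphi]\rangle$, and both $\gamma_2(M)^p$ (a subgroup of $D(G)^p=1$, using that the relevant commutators land in $D(G)$) and $\gamma_p(M)$ (contained in a sufficiently high term of the lower central series, which vanishes by the class bound) are trivial. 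This forces $[x,y^\varphi]^p=1$ as well, and iterating gives the vanishing of $[D(G),\chi(G)]$ modulo $D(G)^p$, hence the powerful embedding. For $\chi(G)'$ one argues in the same spirit, controlling $\gamma_3(\chi(G))$ via the decomposition of $\gamma_k(\chi(G))$ into pieces coming from $G$, $G^\varphi$, and the mixed commutator subgroup $D(G)$, each of which is shown to be trivial in the quotient by $\gamma_2(\chi(G))^p$.

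The main obstacle I anticipate is that the analogue of the clean formula $\gamma_{k}(\nu(G)) = \gamma_k(G)\gamma_k(G^\varphi)[\gamma_{k-1}(G),G^\varphi]$ is more delicate in $\chi(G)$: the defining relation $[g,g^\varphi]=1$ is weaker than the relations defining $\nu(G)$, so $D(G)=[G,G^\varphi]$ is not as tightly controlled, and the subgroups $L(G)$, $W(G)$, $R(G)$ enter the bookkeeping. One must be careful that the commutators produced in the Hall-formula step genuinely lie in $D(G)$ (so that reducing mod $D(G)^p$ kills them) rather than in $L(G)$ or some other part of $\chi(G)$; this is where the fact that $L(G)$ and $D(G)$ commute, and the precise generation of $\gamma_k(\chi(G))$, must be used with care. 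A secondary point is the restriction to odd $p$: for $p=2$ the powerful condition only gives $G'\leqslant G^4$, Hall's formula has extra $\gamma_p$-type terms, and the class bounds from Lemma \ref{lem1} are weaker, which is precisely why the statement fails there — this should be flagged and illustrated by the promised counterexample in Remark \ref{rem.odd}.
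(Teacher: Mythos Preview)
Your approach matches the paper's: reduce modulo the relevant $p$-th power, use powerfulness to reach $[G^p, G^\varphi]$, and kill generators $[x^p, y^\varphi]$ via Hall's collection formula. Two refinements are worth noting.

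First, the structural input you need is simply \cite[Lemma~3.2.1]{Roc82}: once $G$ (and hence $G^\varphi$) has class at most $2$, one has $\gamma_3(\chi(G)) \leqslant [G', G^\varphi]$ outright. No general $\gamma_k(\chi(G))$ decomposition is required, the isomorphism $\nu(G)/\Delta(G)\cong\chi(G)/R(G)$ is never used, and the subgroups $L(G)$, $W(G)$, $R(G)$ do not enter the argument at all; the obstacle you anticipate is not real. Second, your treatment of the Hall error term $\gamma_p(M)$ has a genuine gap: a class bound on $G$ does not by itself force $\gamma_{p+1}(\chi(G)) = 1$ in the quotient you are working in. The paper handles this by invoking Lemma~\ref{lem.normal} at the outset: to prove $\gamma_3(\chi(G)) \leqslant \gamma_2(\chi(G))^p$ one may assume not only $\gamma_2(\chi(G))^p = 1$ but also $\gamma_4(\chi(G)) = [\gamma_3(\chi(G)), \chi(G)] = 1$, and then $\gamma_p(M) \leqslant \gamma_{p+1}(\chi(G)) \leqslant \gamma_4(\chi(G)) = 1$ for $p \geqslant 3$ is immediate. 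The paper also reverses your order, treating $\chi(G)'$ first (where $(G')^p = 1$ follows directly from $\gamma_2(\chi(G))^p = 1$) and then observing that the $D(G)$ case reduces to the same computation once $D(G)^p = 1$ forces $(G')^p = 1$ via \cite[Proposition~4.1.4]{Sidki} (or, equivalently, via the epimorphism $\chi(G)\to G$ you mention).
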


\begin{proof}
We need to prove that  $$\gamma_3(\chi(G)) \leqslant [\chi(G),\chi(G)]^p.$$

Clearly, we can assume that $(\gamma_2(\chi(G)))^{p}=[\chi(G), \chi(G)]^p=1$. Moreover, by Lemma \ref{lem.normal}, we can suppose that $(\gamma_2(\chi(G)))^p\gamma_4(\chi(G))=1$. In particular, we have that $[G^\varphi, G^\varphi]^p = [G, G]^p=[G, G^\varphi]^p=1$. Since $G$ is powerful, it follows that $G$ and $G^{\varphi}$ are nilpotent groups of class at most 2. By \cite[Lemma 3.2.1]{Roc82}, $\gamma_3(\chi(G)) \leqslant [G',G^\varphi].$
Since $G$ is a powerful $p$-group, we deduce that 
$$ \gamma_3(\chi(G)) \leqslant [G',G^\varphi] \leqslant [G^p, G^\varphi].$$

Let $x\in G$ and $y^\varphi\in G^{\varphi}$. Using the Phillip Hall's formula we have that 
$$[x^p, y^\varphi] \equiv\ [x, y^\varphi]^p (\text{mod}\ \gamma_2(M)^p\gamma_p(M)),$$
where $M=\langle x, [x, y^\varphi]\rangle$. Note that $\gamma_p(M)\leqslant\gamma_{p+1}(\chi(G))$. Hence $\gamma_2(M)^p\leqslant \gamma_{2}(\chi(G))^p=1$ and $\gamma_p(M)\leqslant\gamma_{p+1}(\chi(G))\leqslant\gamma_{4}(\chi(G))=1$.

Consequently,  $[x^p, y^{\varphi}]=1$ for any  $ x, y \in G$ and so $[G^p, G^{\varphi}]=1$, this establishes that $\chi(G)'$ is powerfully embedded in $\chi(G)$.   

It remains to prove that $D(G)$ is powerfully embedded in $\chi(G)$, that is, $[D(G), \chi(G)] \leqslant D(G)^p .$
Assuming  that $D(G)^p=1$, by  \cite[Proposition 4.1.4]{Sidki} we get that $[G,G]^p=1$, which follows as  above.
\end{proof}

\begin{rem}\label{rem.odd}
Theorem C does not hold assuming $p$=2. Taking $G=\left<a,b,c\right>$ being an elementary abelian 2-group of rank 3, we have that $\chi(G)$ is a nilpotent group of class 3, where
$$D(G) = \left<[a,b^\varphi], [a,c^\varphi], [b,c^\varphi], [a,b^\varphi,c]\right> \cong \mathbb{Z}_2^4$$ and
$$[D(G),G] = \left<[a,b^\varphi,c]\right> \cong \mathbb{Z}_2.$$
\end{rem}

For $p\geqslant 5$ we obtain the following bound to the exponent $\exp(\chi(G))$, when $G$ is a powerful group.

\begin{cor}
Let $G$ be finite powerful $p$-group with $p \geqslant 5$. Then $\chi(G)$ is potent. Moreover, the exponent $\exp(\chi(G))$ divides $\exp(G)$.  
\end{cor}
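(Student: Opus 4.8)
The plan is to deduce this corollary directly from Theorem C together with Theorem B-type arguments applied to $\chi(G)$, mimicking the structure of the two corollaries that follow Theorem B. First I would observe that, since $p \geqslant 5$ and $G$ is powerful, Theorem C gives that $\gamma_2(\chi(G)) = \chi(G)'$ is powerfully embedded in $\chi(G)$; in particular $\gamma_3(\chi(G)) = [\chi(G)', \chi(G)] \leqslant (\chi(G)')^p = \gamma_2(\chi(G))^p$. Because $p \geqslant 5$ forces $p - 1 \geqslant 4 > 3$, the inclusion $\gamma_3(\chi(G)) \leqslant \gamma_2(\chi(G))^p$ already witnesses $\gamma_{p-1}(\chi(G)) \leqslant \gamma_2(\chi(G))^p$ (using $\gamma_{p-1}(\chi(G)) \leqslant \gamma_3(\chi(G))$), so $\chi(G)$ is a potent $p$-group by definition.

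Next I would extract the exponent bound. Since $\chi(G)$ is potent and $p$ is odd, the result of \cite{JJ} tells us that $\chi(G)$ is power abelian; in particular $\exp(\chi(G))$ is realized on generators, and as $\chi(G)$ is generated by the copies $G$ and $G^\varphi$, each of exponent $\exp(G)$, we get $\exp(\chi(G)) \mid \exp(G)$. This is exactly the pattern used in the proof of Theorem B(b). Alternatively, one could invoke the quotient relation $\chi(G)/R(G) \cong \nu(G)/\Delta(G)$ recorded in the introduction together with the corollary of Theorem B for powerful $p$-groups, but the direct route through power-abelianness of the potent group $\chi(G)$ is cleaner and self-contained.

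I expect the only genuine subtlety to be the bookkeeping around reducing to the case $\gamma_2(\chi(G))^p = 1$ and verifying that Theorem C is being applied in the finite setting it requires; these are routine, since $\chi(G)$ is a finite $p$-group whenever $G$ is (by \cite{Sidki,Roc82}), and the potency condition $\gamma_{p-1}(\chi(G)) \leqslant \gamma_2(\chi(G))^p$ follows from Theorem C purely formally once $p \geqslant 5$. The main obstacle, to the extent there is one, is simply making sure the chain $\gamma_{p-1}(\chi(G)) \leqslant \gamma_3(\chi(G)) \leqslant \gamma_2(\chi(G))^p$ is valid, which it is because $p - 1 \geqslant 4$; for $p = 5$ this is tight, and for larger $p$ there is room to spare. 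No new computation beyond what Theorem C already provides should be needed.
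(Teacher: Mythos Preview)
Your proposal is correct and follows essentially the same route as the paper: use Theorem C to get $\gamma_3(\chi(G)) \leqslant (\chi(G)')^p$, observe that $p-1 \geqslant 3$ forces $\gamma_{p-1}(\chi(G)) \leqslant \gamma_3(\chi(G)) \leqslant \chi(G)^p$ so that $\chi(G)$ is potent, and then invoke power-abelianness from \cite{JJ} together with the fact that $\chi(G)$ is generated by $G \cup G^{\varphi}$ to read off the exponent. Your write-up is in fact slightly more explicit than the paper's about why the exponent conclusion follows, but the argument is the same.
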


\begin{proof}
By Theorem C,  $[\chi(G)',\chi(G)] \leqslant \chi(G)^p$. In particular,

$$
\gamma_{p-1}(\chi(G)) \leqslant [\chi(G)',\chi(G)] \leqslant \chi(G)^p.
$$

As $p\geqslant 5$, we have $G$ is a potent $p$-group and so, the exponent $\exp(\chi(G)) = \exp(G)$, which completes the proof.  
\end{proof}

\section*{Acknowledgment}
The authors wish to thank professor Nora\'i Romeu Rocco for interesting discussions. This work was partially supported by FAPDF - Brazil, Grant: 0193.001344/2016.

\end{document}